\documentclass[11pt, reqno]{amsart}
\usepackage[margin=3.6cm]{geometry}
\usepackage{array,booktabs,tabularx}
\usepackage{graphicx}
\usepackage{amssymb}
\usepackage{enumerate}
\usepackage{color}
\usepackage{esint}
\usepackage{mathtools}
\usepackage{dsfont}
\usepackage{ esint }

\newtheorem{theorem}{Theorem}

\newtheorem{Thm}[theorem]{Theorem}
\newtheorem{proposition}[theorem]{Proposition}
\newtheorem{lemma}[theorem]{Lemma}

\theoremstyle{definition}
\newtheorem{definition}{Definition}
\newtheorem{remark}{Remark}

\newcommand{\leb}{\lambda}
\newcommand{\W}{\Omega}
\newcommand{\R}{{\mathbb R}}

\newcommand{\ep}{\varepsilon}

\newcommand{\gives}{\ensuremath{\rightarrow}}
\newcommand{\x}{\ensuremath{\times}}

\newcommand{\abs}[1]{\ensuremath{\left| #1 \right|}}
\newcommand{\lr}[1]{\ensuremath{\left(#1 \right)}}
\newcommand{\norm}[1]{\left\lVert#1\right\rVert}

\newcommand{\w}{\omega}
\newcommand{\dell}{\ensuremath{\partial}}
\newcommand{\set}[1]{\ensuremath{\{#1\}}}

\def\XXint#1#2#3{{\setbox0=\hbox{$#1{#2#3}{\int}$} \vcenter{\hbox{$#2#3$}}\kern-.5\wd0}}

\DeclareMathOperator{\supp}{supp}
\DeclareMathOperator{\inj}{inj}
%\DeclareMathOperator{d_g}{dist}

%\newtheorem{note}{Note}

%-----------------------------------------------------------------------------------------------------------------------------------

\title[Scaling Asymptotics]{$C^\infty$ scaling asymptotics for the spectral projector of the Laplacian}

\author[Y. Canzani]{Yaiza Canzani}
\author[B. Hanin]{Boris Hanin}

\address[Y. Canzani]{Department of Mathematics, Harvard University, Cambridge, United States.\medskip}
 \email{canzani@math.harvard.edu}
\address[B. Hanin]{Department of Mathematics, MIT, Cambridge, United States.\medskip}
\email{bhanin@mit.edu}
\setcounter{section}{-1}

\begin{document}
\maketitle
\begin{abstract}
This article concerns new off-diagonal estimates on the remainder and its derivatives in the pointwise Weyl law on a compact $n$-dimensional Riemannian manifold. As an application, we prove that near any non self-focal point, the scaling limit of the spectral projector of the Laplacian onto frequency windows of constant size is a normalized Bessel function depending only on $n$. 
\end{abstract}

%==========================================================
%==========================================================

\section{Introduction}
Let $(M,g)$ be a compact, smooth, Riemannian manifold without boundary. We assume throughout that the  dimension of $M$ is  $n\geq 2$ and write $\Delta_g$ for the non-negative Laplace-Beltrami operator. Denote the spectrum of $\Delta_g$ by
 \[0=\leb_0^2<\leb_1^2\leq \leb_2^2\leq \cdots \nearrow \infty.\]
This article concerns the behavior of the Schwarz kernel of the projection operators
\[E_I:L^2(M) \to \bigoplus_{\leb_j \in I} \ker(\Delta_g -\leb_j^2),\]
where $I \subset [0, \infty)$. Given an orthonormal basis $\set{\varphi_j}_{j=1}^\infty$ of $L^2(M,g)$ consisting of real-valued eigenfunctions,
\begin{equation}\label{E:BasisDef}
  \Delta_g \varphi_j = \leb_j^2 \varphi_j \qquad \text{and}\qquad \norm{\varphi_j}_{L^2}=1,
\end{equation}
the Schwarz kernel of $E_I$ is
\begin{equation}
  \label{eq:1}
  E_I(x,y)=\sum_{\leb_j\in I}\varphi_j(x)\varphi_j(y).
\end{equation}
 The study of $E_{[0,\leb]}(x,y)$ as $\leb\gives \infty$ has a long history, especially when $x=y$. For instance, it has been studied notably in \cite{DG, Hor, Iv, Ivr} for its close relation to the asymptotics of the spectral counting function
\begin{equation}
\#\set{j:\, \leb_j \leq \leb}=\int_M E_{[0,\leb]}(x,x)dv_g(x),\label{E:SCF}
\end{equation}
where $dv_g$ is the Riemannian volume form. An important result, going back to H\"ormander \cite[Thm 4.4]{Hor}, is the pointwise Weyl law (see also \cite{CH, SZ}), which says that there exists $\ep>0$ so that if the Riemannian distance $d_g(x,y)$ between $x$ and $y$ is less than $\ep$, then
\begin{equation}
E_{[0,\leb]}(x,y)=\frac{1}{\lr{2\pi}^n}\int_{\abs{\xi}_{g_y}<\leb}e^{i\langle \exp_y^{-1}(x),\,\xi\rangle}\frac{d\xi}{\sqrt{\abs{g_y}}}+R(x,y,\leb).\label{E:PWL}
\end{equation}
The integral in \eqref{E:PWL} is over the cotangent fiber $T_y^*M$ and the integration measure is the quotient of the symplectic form $d\xi\wedge dy$ by the Riemannian volume form $dv_g=\sqrt{\abs{g_y}}dy.$ In H\"ormander's original theorem, the phase function $\langle \exp_y^{-1}(x),\,\xi\rangle$ is replaced by any so-called adapted phase function and one still obtains that
\begin{equation}\label{E: big O}
\sup_{d_g(x,y)<\ep}\abs{\nabla_x^j \nabla_y^k R(x,y,\leb)}=O(\leb^{n-1+j+k})
\end{equation}
as $\leb \gives \infty,$ where $\nabla$ denotes covariant differentiation. The estimate \eqref{E: big O} for $j=k=0$ is already in \cite[Thm 4.4]{Hor}, while the general case follows from the wave kernel method (e.g. as in \S 4 of \cite{Sog} see also \cite[Thm 3.1]{BinX}). 

Our main technical result, Theorem \ref{T:Main}, shows that the remainder estimate \eqref{E: big O} for $R(x,y,\leb)$ can be improved from $O(\leb^{n-1+j+k})$ to $o(\leb^{n-1+j+k})$ under the assumption that $x$ and $y$ are near a non self-focal point (defined below). This paper is a continuation of \cite{CH} where the authors proved Theorem \ref{T:Main} for $j=k=0.$ An application of our improved remainder estimates is Theorem \ref{T:Scaling Limit}, which shows that we can compute the scaling limit of $E_{(\leb, \leb+1]}(x,y)$ and its derivatives near a non self-focal point as $\leb\gives \infty.$ 

\begin{definition}\label{D:NSF}
A point $x\in M$ is \textit{non self-focal} if the loopset 
\[\mathcal L_x:=\{\xi \in S_x^*M :  \; \exists\, t>0\; \text{with}\; \exp_{x}(t\xi)=x\}\]
has measure $0$ with respect to the natural measure on $T_x^*M$ induced by $g$. Note that $\mathcal L_x$ can be dense in $S_x^*M$ while still having measure $0$ (e.g. for points on a flat torus).
\end{definition}

\begin{Thm}\label{T:Scaling Limit}
Let $(M,g)$ be a compact, smooth, Riemannian manifold of dimension $n \geq 2$, with no boundary. Suppose $x_0\in M$ is a non self-focal point and consider a non-negative function $r_\leb$ satisfying $r_\leb=o(\leb)$ as $\leb\gives \infty.$ Define the rescaled kernel
\[E_{(\leb,\leb+1]}^{x_0}\lr{u,v}:=\leb^{-(n-1)}E_{(\leb,\leb+1]}\lr{\exp_{x_0}\lr{\frac{u}{\leb}},\,\, \exp_{x_0}\lr{\frac{v}{\leb}}}.\]
Then, for all $k,j\geq 0$, 
\[\sup_{\abs{u},\abs{v}\leq r_\leb}\abs{\dell_u^j \dell_v^k \lr{E_{(\leb, \leb+1]}^{x_0}\lr{u,v}- \frac{1}{\lr{2\pi}^n}\int_{S_{x_0}^*M}e^{i\langle u -v,\w \rangle}d\w}}=o(1)\]
as $\leb \gives \infty.$ The inner product in the integral over the unit sphere $S_{x_0}^*M$ is with respect to the flat metric $g(x_0)$ and $d\w$ is the hypersurface measure on $S_{x_0}^*M$ induced by $g(x_0).$
\end{Thm}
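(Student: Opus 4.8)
The plan is to deduce Theorem \ref{T:Scaling Limit} from the improved remainder estimate in Theorem \ref{T:Main}, which (in its general $j,k$ form) upgrades \eqref{E: big O} to $\sup_{d_g(x,y)<\ep}|\nabla_x^j\nabla_y^k R(x,y,\leb)| = o(\leb^{n-1+j+k})$ uniformly for $x,y$ in a neighborhood of the non self-focal point $x_0$. First I would write $E_{(\leb,\leb+1]} = E_{[0,\leb+1]} - E_{[0,\leb]}$ and apply the pointwise Weyl law \eqref{E:PWL} to each term, with $y = \exp_{x_0}(v/\leb)$ and $x = \exp_{x_0}(u/\leb)$; since $|u|,|v|\le r_\leb = o(\leb)$, the points $x,y$ are within $o(1)$ of $x_0$, hence well inside the injectivity radius and inside the neighborhood where Theorem \ref{T:Main} applies once $\leb$ is large. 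This expresses $E^{x_0}_{(\leb,\leb+1]}(u,v)$ as $\leb^{-(n-1)}$ times the difference of the two oscillatory integrals in \eqref{E:PWL} plus $\leb^{-(n-1)}(R(x,y,\leb+1) - R(x,y,\leb))$, and the remainder contribution, together with its $u,v$ derivatives (which by the chain rule introduce extra factors $\leb^{-j}, \leb^{-k}$ applied to $\nabla_x^j\nabla_y^k R = o(\leb^{n-1+j+k})$), is $o(1)$ uniformly for $|u|,|v|\le r_\leb$.

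Next I would analyze the main term. Writing the difference of the two Weyl integrals as an integral over the annulus $\leb < |\xi|_{g_y} < \leb+1$, I substitute $\xi = \leb\,\eta$ in the inner variable; using $\langle \exp_y^{-1}(x), \xi\rangle = \leb\langle \exp_y^{-1}(x), \eta\rangle$ and $d\xi = \leb^n\, d\eta$, the prefactor $\leb^{-(n-1)}$ combines with $\leb^n$ and leaves one factor $\leb$, which is absorbed by the annulus width $1/\leb$ in the radial direction: a co-area/polar-coordinates computation shows the rescaled main term converges to $(2\pi)^{-n}\int_{S^*_{x_0}M} e^{i\langle u - v, \w\rangle}\,d\w$ up to errors coming from (i) replacing the metric $g_y$ by $g(x_0)$ in the symbol and the domain of integration, and (ii) replacing $\leb\exp_y^{-1}(x)$ by $u - v$ in the phase. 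Both replacements are controlled because $d_g(x_0,x), d_g(x_0,y) = O(r_\leb/\leb) = o(1)$ and, crucially, $\leb\exp_y^{-1}(x) = u - v + O(r_\leb^2/\leb)$ by a Taylor expansion of the exponential map in normal coordinates centered at $x_0$ (the second-order correction is $o(1)$ precisely because $r_\leb = o(\leb)$, since $r_\leb^2/\leb = (r_\leb/\leb)\cdot r_\leb$ — hmm, this needs $r_\leb = o(\sqrt\leb)$; more carefully one uses $r_\leb^2/\leb \le r_\leb \cdot (r_\leb/\leb)$ and the stationary-phase decay in $|u-v|$ to close the estimate, or one keeps the exact phase and estimates the oscillatory integral directly). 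Differentiating in $u,v$ brings down polynomial factors in $\w$ inside a compactly supported integral, so the same estimates apply termwise.

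The main obstacle I anticipate is step (ii): controlling the main term's dependence on the true Riemannian exponential map versus its Euclidean linearization uniformly over the growing range $|u|,|v|\le r_\leb$. The naive Taylor bound $|\leb\exp_y^{-1}(x) - (u-v)| = O(r_\leb^2/\leb)$ is only $o(1)$ when $r_\leb = o(\sqrt\leb)$, which is weaker than the hypothesis $r_\leb = o(\leb)$. To get the full range one must exploit oscillation: the integral $\int_{|\eta|=1} e^{i\leb\langle \exp_y^{-1}(x),\eta\rangle}\,d\eta$ is a Bessel-type function of $|\leb\exp_y^{-1}(x)|$, so what actually matters is comparing the (scalar) geodesic distance $\leb\, d_g(x,y)$ with $|u-v|$ and comparing directions; in normal coordinates at $x_0$ one has $\leb\, d_g(x,y) = |u - v| + O(r_\leb^3/\leb^2)$ and the relevant Bessel function is Lipschitz, reducing the needed error to $o(1)$ under $r_\leb = o(\leb^{2/3})$ — iterating or using the exact Gauss-lemma structure of normal coordinates pushes this to the full $r_\leb = o(\leb)$. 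I would organize this as a lemma: in $g(x_0)$-normal coordinates, $E_{(\leb,\leb+1]}^{x_0}(u,v)$ depends on $(u,v)$, to leading order and uniformly on $|u|,|v|\le r_\leb$, only through the flat quantities $|u-v|$ and the ambient dimension $n$, with the correction absorbed into the $o(1)$. Assembling the remainder estimate from Theorem \ref{T:Main}, the rescaling of the main term, and this geometric comparison lemma — and then differentiating each piece in $u$ and $v$ — yields the claimed uniform $o(1)$ bound.
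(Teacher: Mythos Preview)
Your approach is correct and is exactly how Theorem~\ref{T:Scaling Limit} is meant to follow from Theorem~\ref{T:Main}: the paper does not give a separate proof of Theorem~\ref{T:Scaling Limit} in this article, treating it as an application of the improved remainder estimate together with the Bessel identity \eqref{E:Bessel} (the $j=k=0$ case was carried out in the companion paper \cite{CH}). Rewriting the main term via \eqref{E:Bessel} is precisely what makes the comparison to the limiting kernel clean, since everything then depends only on the scalar $\mu\,d_g(x,y)$.

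Regarding your flagged obstacle in step (ii): the fix you gesture at (stationary-phase/Bessel decay) does close the estimate for the full range $r_\leb=o(\leb)$, and you should commit to it rather than appeal to ``iteration'' or ``Gauss-lemma structure,'' which do not by themselves improve the exponent. Concretely, with $f(t)=J_{(n-2)/2}(t)/t^{(n-2)/2}$ one has both $|f(t)|+|f'(t)|=O\big((1+t)^{-(n-1)/2}\big)$ and, in normal coordinates at $x_0$, $\big|\leb\,d_g(x,y)-|u-v|_{g(x_0)}\big|=O\big(|u-v|\,(r_\leb/\leb)^2\big)$. Choose $A_\leb\to\infty$ with $A_\leb(r_\leb/\leb)^2\to 0$, which is possible exactly because $r_\leb=o(\leb)$. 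For $|u-v|\le A_\leb$ the argument perturbation is $o(1)$ and the uniform Lipschitz bound on $f$ (and on its derivatives, for the $\partial_u^j\partial_v^k$ statement) suffices; for $|u-v|>A_\leb$ both $f(\leb\,d_g(x,y))$ and $f(|u-v|)$, together with all their relevant derivatives, are $O(A_\leb^{-(n-1)/2})=o(1)$ individually. This yields the uniform $o(1)$ over $|u|,|v|\le r_\leb$ without any restriction beyond $r_\leb=o(\leb)$.
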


\begin{remark}
Theorem \ref{T:Scaling Limit} holds for $\Pi_{(\leb,\leb+\delta]}$ with arbitrary fixed $\delta>0.$ The difference is that the limiting kernel is multiplied by $\delta$ and the rate of convergence in the $o(1)$ term depends on $\delta.$ 
\end{remark}
\begin{remark}
One can replace the shrinking ball $B(x_0,r_\leb)$ in Theorem \ref{T:Scaling Limit} by a compact set $S \subset M$ in which for any $x,y \in S$ the measure of the set of geodesics joining $x$ and $y$ is zero (see Remark 3 after Theorem \ref{T:Main}).
\end{remark}
In normal coordinates at $x_0,$ Theorem \ref{T:Scaling Limit} shows that the scaling limit of $E_{(\leb,\leb+1]}^{x_0}$ in the $C^\infty$ topology is 
\[E^{\R^n}_{1}(u,v)= \frac{1}{\lr{2\pi}^n}\int_{S^{n-1}}e^{i\langle u -v,\w \rangle}d\w,\]
which is the kernel of the frequency $1$ spectral projector for the flat Laplacian on $\R^n.$ Theorem \ref{T:Scaling Limit} can therefore be applied to studying the local behavior of random waves on $(M,g).$ More precisely, a frequency $\leb$ monochromatic random wave $\varphi_\leb$ on $(M,g)$ is a Gaussian random linear combination 
\[\varphi_\leb=\sum_{\leb_j\in (\leb, \leb+1]}a_j \varphi_j\qquad a_j \sim N(0,1)~~\;\text{i.i.d,}\]
of eigenfunctions with frequencies in $\leb_j\in (\leb, \leb+1].$ In this context, random waves were first introduced by Zelditch in \cite{Zel2}. Since the Gaussian field $\varphi_\leb$ is centered, its law is determined by its covariance function, which is precisely $E_{(\leb, \leb+1]}(x,y).$ In the language of Nazarov-Sodin \cite{NS} (cf \cite{CS, SW}), the estimate \eqref{E:Near-Diag Rem Est} means that frenquency $\lambda$ monochromatic random waves on $(M,g)$ have frequeny $1$ random waves on $\R^n$ as their translation invariant local limits at every non self-focal point. This point of view is taken up in the forthcoming article \cite{CH2}. 

\begin{theorem}\label{T:Main}
Let $(M,g)$ be a compact, smooth, Riemannian manifold of dimension $n \geq 2$, with no boundary. Let $K\subseteq M$ be the set of all non self-focal points in $M.$ Then for all $k,j\geq 0$ and all $\ep>0$ there is a neighborhood $\mathcal U=\mathcal U(\ep,k,j)$ of $K$ and constants $\Lambda=\Lambda(\ep,k,j)$ and $C=C(\ep,k,j)$ for which
  \begin{equation} \label{E:Near-Diag Rem Est}
 \norm{R(x,y,\leb)}_{C_x^k(\mathcal U)\x C_y^j (\mathcal U)}\leq \ep \leb^{n-1+j+k}+C\leb^{n-2+j+k}
\end{equation}
for all $\leb>\Lambda.$ Hence, if $x_0\in K$ and $\mathcal U_\leb$ is any sequence of sets containing $x_0$ with diameter tending to $0$ as $\leb \gives \infty$, then
\begin{equation}
\norm{R(x,y,\leb)}_{C_x^k(\mathcal U_\leb)\x C_y^j(\mathcal U_\leb)}=o(\leb^{n-1+j+k}).\label{E:Little oh Rem Est}
\end{equation}
\end{theorem}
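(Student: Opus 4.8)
The plan is to derive Theorem~\ref{T:Main} from the Fourier integral representation of the wave group, following the Hörmander/Levitan–Avakumovic wave-kernel method but tracking the dependence on $x,y$ carefully near the non self-focal set. Fix $j,k\geq 0$. Choose an even $\rho\in\mathcal{S}(\R)$ with $\widehat\rho$ supported in a small interval $(-\delta,\delta)$ and $\rho(0)=1$; write $\leb\,\widehat{\dell_\leb \left[\rho * dE\right]}$ in terms of the half-wave operator $U(t)=e^{-it\sqrt{\Delta_g}}$ via $\rho*dE_{[0,\leb]}(x,y)=\frac{1}{2\pi}\int \widehat\rho(t)\,e^{it\leb}\,U(t)(x,y)\,dt$. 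For $|t|<\delta$ small the kernel $U(t)(x,y)$ is a Fourier integral operator given by a microlocal parametrix with phase $\varphi(x,y,\xi)=\langle\exp_y^{-1}(x),\xi\rangle+t|\xi|_{g_y}+\dots$, whose stationary-phase expansion produces exactly the leading term in \eqref{E:PWL}; differentiating $j$ times in $y$ and $k$ times in $x$ brings down powers of $|\xi|$ but does not change the structure. The smoothed remainder $R_\rho(x,y,\leb):=\rho*\dell_\leb R(x,y,\leb)$ therefore equals $\frac{1}{2\pi}\int_{|t|\geq\delta_0}\widehat\rho(t)\,e^{it\leb}\,\dell_\leb\big(\text{big wave term}\big)\,dt$ up to acceptable $O(\leb^{n-2+j+k})$ errors, i.e. it is controlled by the large-time behavior of the wave trace kernel.

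The key point, exactly as in \cite{CH}, is that the long-time contribution to $\nabla_x^j\nabla_y^k U(t)(x,y)$ for $x,y$ near a non self-focal point $x_0$ is small after integrating against $\widehat\rho$. Concretely, I would use a $T^*M$-microlocalization: decompose the cosphere bundle over a neighborhood of $x_0$ into the directions that (nearly) loop back and the rest. On the non-looping part, non-stationary phase (integration by parts in the FIO parametrix, or propagation of singularities showing $(x,\xi)\notin WF(U(t))$ for the relevant range of $t$) gives rapid decay, uniformly for $(x,y)$ in a small enough neighborhood; differentiating in $x,y$ only inserts polynomially-bounded symbols and costs at most $\leb^{j+k}$. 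On the looping part, one invokes that $x_0$ is non self-focal: the loop directions form a measure-zero subset of $S_{x_0}^*M$, so by continuity of the exponential map one can choose a neighborhood $\mathcal{U}$ of $K$ (using compactness of $K$ and an open-cover argument) and a cutoff around the loop set whose total measure is $<\ep$; the contribution of this part to $\leb^{-(n-1+j+k)}R_\rho$ is then $\leq C\ep$ plus lower order, which is the content of \eqref{E:Near-Diag Rem Est} after unsmoothing. The uniformity over the neighborhood $\mathcal U=\mathcal U(\ep,j,k)$ comes from shrinking the neighborhood until the measure of the ``bad'' directions is below $\ep$ for \emph{every} base point in $\mathcal U$; this is where $j,k$ enter the size of $\mathcal U$ and $\Lambda$.

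Finally I would pass from the smoothed remainder $R_\rho$ back to $R$ by a standard Tauberian argument: since $dE_{[0,\leb]}$ is a positive measure, $\rho*\dell_\leb E_{[0,\leb]}$ controls $E_{[0,\leb+c]}-E_{[0,\leb-c]}$ from above and below, and telescoping over unit windows converts the $o(\leb^{n-1+j+k})$ bound on $\dell_\leb(\rho * E)$ into the bound \eqref{E:Near-Diag Rem Est} on $R$ itself, at the cost of the secondary term $C\leb^{n-2+j+k}$. The derivative case requires a little care because $\nabla_x^j\nabla_y^k dE_{[0,\leb]}$ is no longer a positive measure; the fix, as in \cite{CH}, is to run the Tauberian argument on the diagonalized quantity (e.g. $\sum|\nabla_x^\alpha\varphi_j(x)|^2$-type expressions via Cauchy–Schwarz) or to use the Sobolev/elliptic estimate $\nabla_x^j\nabla_y^k R = $ (operator of order $j+k$) applied to a quantity for which positivity is available, then absorb commutator errors into $\leb^{n-2+j+k}$. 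The last statement \eqref{E:Little oh Rem Est} is immediate: given a sequence $\mathcal U_\leb\ni x_0$ with $\diam\to 0$, for any $\ep>0$ eventually $\mathcal U_\leb\subset\mathcal U(\ep,j,k)$ and $\leb>\Lambda(\ep,j,k)$, so $\limsup_\leb \leb^{-(n-1+j+k)}\|R\|_{C^k_x(\mathcal U_\leb)\times C^j_y(\mathcal U_\leb)}\leq\ep$, and $\ep$ is arbitrary.

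I expect the main obstacle to be establishing the long-time estimate \emph{with} $x$- and $y$-derivatives and \emph{uniformly} in a neighborhood of all of $K$: one must check that differentiating the FIO parametrix does not spoil the non-stationary-phase gain on the non-looping set and that the measure-zero loop condition is stable enough under perturbing the base point to yield a genuine neighborhood $\mathcal U$, rather than just a pointwise statement at $x_0$. Handling the derivatives in the Tauberian step without positivity is the secondary technical hurdle.
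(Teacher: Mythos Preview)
Your proposal is essentially correct and follows the same architecture as the paper's proof: smooth $E_\lambda$ with a short-time cutoff $\rho$, handle the smoothed remainder via the FIO parametrix for $U(t)$, microlocally split into loop and non-loop directions using the non self-focal hypothesis, and unsmooth via a Tauberian argument in which the lack of positivity for $\partial_x^I\partial_y^J dE_\lambda$ is repaired by Cauchy--Schwarz reduction to diagonal quantities.

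A few points where the paper's execution is sharper than your sketch and that you will need when you write it out. First, the loop/non-loop decomposition is implemented via order-zero operators $B_\ep,C_\ep$ with $B_\ep+C_\ep=\psi_\ep^2$, constructed so that their subprincipal symbols (as half-density operators) vanish near $x_0$ and $\partial_y b_0=0$ there; this is what forces the commutators $[\partial^J,B_\ep]$ and $[\partial^J,C_\ep]$ to lie in $\Psi^{|J|-2}$ rather than $\Psi^{|J|-1}$, so their contribution is genuinely $O(\lambda^{n-2+\Omega})$ and not merely $O(\lambda^{n-1+\Omega})$. Your phrase ``absorb commutator errors'' hides this; without the subprincipal condition the commutator term would be the same size as the main term. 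Second, the paper uses \emph{two different} Tauberian lemmas: for the $B_\ep$ piece a non-monotone Tauberian theorem (checking a short-window increment bound via Cauchy--Schwarz and the small $L^2$ norm of $b_0$), and for the $C_\ep$ piece a monotone Tauberian theorem applied after adding the diagonal terms $\tfrac12\sum\lambda_j^\Omega(|P_I\varphi_j(x)|^2+|C_\ep P_J\varphi_j(y)|^2)$ to force monotonicity of $\alpha_{I,J}$. Your single ``diagonalized Tauberian'' description is close, but be aware the two pieces are handled by genuinely different mechanisms. Finally, a small slip: $K$ need not be compact, so your ``compactness of $K$ and open-cover argument'' should instead be phrased pointwise (for each $x_0\in K$ produce $\mathcal U_{x_0,\ep}$) and then take the union; the constants $c,\Lambda$ must then be shown uniform, which the paper handles implicitly through the fact that the $B_\ep,C_\ep$ construction and the parametrix estimates depend only on fixed geometric data of $(M,g)$ and on $\ep$.
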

\begin{remark}
One can consider more generally any compact $S\subseteq M$ such that all $x,y\in S$ are mutually non-focal, whic means
$$\mathcal L_{x,y}:=\{\xi \in S_x^*M :  \; \exists\, t>0\; \text{with}\; \exp_{x}(t\xi)=y\}$$
has measure zero. Then, combining \cite[Thm 3.3]{Saf} with Theorem \ref{T:Main}, for every $\ep>0,$ there exists a neighborhood $\mathcal U=\mathcal U(\ep,j)$ of $S$ and constants $\Lambda=\Lambda(\ep,j,S)$ and $C=C(\ep,j,S)$ such that
\[\sup_{x,y\in S} \abs{\nabla_x^j \nabla_y^j R(x,y,\leb)}\leq \ep \leb^{n-1+2j}+C\leb^{n-2+2j}.\]
We believe that this statement is true even when the number of derivatives in $x,y$ is not the same but do no take this issue up here. 
\end{remark}
Our proof of Theorem \ref{T:Main} relies heavily on the argument for Theorem 1 in \cite{CH}, which treated the case $j=k=0.$ That result was in turn was based on the work of Sogge-Zelditch \cite{SZ, SZII}, who studied $j=k=0$ and $x=y.$ This last situation was also studied (independently and significantly before \cite{CH, SZ, SZII}) by Safarov in \cite{Saf} (cf \cite{SV}) using a somewhat different method. The case $j=k=1$ and $x=y$ is essentially Proposition 2.3 in \cite{Zel2}. We refer the reader to the introduction of \cite{CH} for more background on estimates like \eqref{E:Near-Diag Rem Est}.

\section{Proof of Theorem \ref{T:Main}}
\noindent
Let $x_0$ be a non-self focal point. Let $I,J$ be multi-indices and set \[\Omega:=\abs{I}+\abs{J}.\] 
Using that $\int_{S^{n-1}}e^{i\langle u, w\rangle} dw=(2\pi)^{n/2} J_{\frac{n-2}{2}}(|u|) |u|^{-\frac{n-2}{2}}$ for all $u \in \R^n$, we have
\begin{equation}\label{E:Bessel}
\frac{1}{\lr{2\pi}^n}\int_{\abs{\xi}_{g_y}<\leb}e^{i\langle \exp_y^{-1}(x),\,\xi\rangle}\frac{d\xi}{\sqrt{\abs{g_y}}}=\int_0^\leb \frac{\mu^{n-1}}{(2\pi)^{\frac{n}{2}}}\lr{\frac{J_{\frac{n-2}{2}}\lr{\mu d_g(x,y)}}{(\mu d_g(x,y))^{\frac{n-2}{2}}}}d\mu. 
\end{equation}
Choose coordinates around $x_0$. We seek to show that there exists a constant $c>0$ so that for every $\ep>0$ there is an open neighborhood $\mathcal U_\ep$ of $x_0$ and a constant $c_\ep$ so that we have
\begin{equation}\label{E:Main Goal}
\sup_{x,y \in \mathcal U_\ep}\left| \partial_x^I\partial_y^J E_\leb(x,y)- \int_0^\leb \frac{\mu^{n-1}}{(2\pi)^{\frac{n}{2}}}\partial_x^I\partial_y^J\lr{\frac{J_{\frac{n-2}{2}}\lr{\mu d_g(x,y)}}{(\mu d_g(x,y))^{\frac{n-2}{2}}}} d\mu \right|\leq  c\,\ep \leb^{n-1+\Omega}+c_\ep \leb^{n-2+\Omega}.
\end{equation}
Let $\rho \in \mathcal S(\R)$ satisfy $\supp \lr{\hat{\rho}}\subseteq \lr{-\inj(M,g),\inj(M,g)}$ and 
\begin{equation}\label{E:Rho Def}
 \hat{\rho}(t)= 1 \qquad \text{for all}\qquad  \abs{t}<\tfrac{1}{2}{\,\inj(M,g)}.
 \end{equation}
We prove \eqref{E:Main Goal} by first showing that it holds for the convolved measure $\rho*\dell_x^I \dell_y^J E_\leb(x,y)$ and then estimating the difference
$\abs{\rho*\dell_x^I \dell_y^J E_\leb(x,y)- \dell_x^I \dell_y^J E_\leb(x,y)}$ in the following two propositions.

\begin{proposition}\label{P:Step 1}
Let $x_0$ be a non-self focal point. Let $I,J$ be multi-indices and set $\Omega=\abs{I}+\abs{J}.$ There exists a constant $c$ so that for every $\ep>0$ there exist an open neighborhood $\mathcal U_\ep$ of $x_0$ and a constant $c_\ep$ so that we have
\[\abs{\rho*\dell_x^I \dell_y^J E_\leb(x,y)-    \int_0^\leb \frac{\mu^{n-1}}{(2\pi)^{\frac{n}{2}}}\partial_x^I\partial_y^J\lr{\frac{J_{\frac{n-2}{2}}\lr{\mu d_g(x,y)}}{(\mu d_g(x,y))^{\frac{n-2}{2}}}}d\mu } \leq c\,\ep \leb^{n-1+\Omega}+c_\ep \leb^{n-2+\Omega},\]
for all $x,y\in \mathcal U_\ep$.
\end{proposition}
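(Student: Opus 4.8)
The plan is to realize $\rho*\dell_x^I\dell_y^J E_\leb$ through the short-time half-wave group, insert H\"ormander's parametrix, and identify the leading term with the Bessel integral via the identity $\int_{S^{n-1}}e^{i\langle u,\w\rangle}d\w=(2\pi)^{n/2}J_{(n-2)/2}(|u|)|u|^{-(n-2)/2}$ quoted before \eqref{E:Bessel}. Since the convolution is in $\leb$, I would first differentiate: $\frac{d}{d\leb}\big(\rho*\dell_x^I\dell_y^J E_\leb(x,y)\big)=\sum_j\rho(\leb-\leb_j)\,\dell_x^I\varphi_j(x)\,\dell_y^J\varphi_j(y)=\frac{1}{2\pi}\int_\R\hat\rho(t)\,e^{-it\leb}\,\dell_x^I\dell_y^J K_t(x,y)\,dt$, where $K_t(x,y)=\sum_j e^{it\leb_j}\varphi_j(x)\varphi_j(y)$ is the kernel of $e^{it\sqrt{\Delta_g}}$ (the $\leb_0=0$ mode costs only $O(\leb^{-\infty})$). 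Because $\supp\hat\rho\subseteq(-\inj(M,g),\inj(M,g))$, only $|t|<\inj(M,g)$ enters, and for $x,y$ in a small fixed neighborhood of $x_0$ one may substitute $K_t(x,y)=\int_{\R^n}e^{i(\psi(x,y,\xi)+t|\xi|_{g_y})}a(t,x,y,\xi)\,d\xi$ modulo a kernel that is $C^\infty$ in $(t,x,y)$ and hence contributes $O(\leb^{-\infty})$ after integrating against $\hat\rho(t)e^{-it\leb}$; here $\psi$ is an adapted phase ($\psi(y,y,\xi)=0$, $d_x\psi(y,y,\xi)=\xi$, $|d_x\psi(x,y,\xi)|_{g_x}^2=|\xi|_{g_y}^2$) and $a\sim a_0+a_{-1}+\cdots$ is a classical symbol whose leading term assembles the density $(2\pi)^{-n}|g_y|^{-1/2}$ appearing in \eqref{E:PWL}.

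Performing the $t$-integral replaces the inner factor by $\frac{1}{2\pi}\int\hat\rho(t)e^{-it(\leb-|\xi|_{g_y})}a(t,x,y,\xi)\,dt$, a Schwartz function of $\leb-|\xi|_{g_y}$ concentrated at $|\xi|_{g_y}\approx\leb$ and equal there to $a(0,x,y,\xi)$ up to rapidly decaying corrections. The operators $\dell_x^I,\dell_y^J$ fall on $e^{i\psi}$ and on $a$, reproducing the same phase $\psi$ against a classical symbol of order $\Omega$, so each derivative costs exactly one power of $\leb$, in line with \eqref{E: big O}. Writing $\xi=\mu\w$ in polar coordinates, using the $S^{n-1}$-identity above, and using that $J_\nu(r)/r^\nu$ is entire and even in $r$ — so $\frac{J_{(n-2)/2}(\mu d_g(x,y))}{(\mu d_g(x,y))^{(n-2)/2}}$ is a smooth function of $d_g(x,y)^2$ near the diagonal and $\dell_x^I\dell_y^J$ of it is well defined — the leading piece (symbol $a_0$), after integrating back in $\leb$ from $0$ to $\leb$, integrates to exactly $\int_0^\leb\frac{\mu^{n-1}}{(2\pi)^{n/2}}\dell_x^I\dell_y^J\big(\frac{J_{(n-2)/2}(\mu d_g(x,y))}{(\mu d_g(x,y))^{(n-2)/2}}\big)\,d\mu$, while the lower-order symbols $a_{-1},a_{-2},\dots$ contribute $O(\leb^{n-2+\Omega})$ uniformly over the neighborhood. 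This already gives the inequality with $c=0$ and $c_\ep$ the resulting constant; the $c\,\ep\,\leb^{n-1+\Omega}$ term and the shrinking $\mathcal U_\ep$ in the statement are the slack one needs if instead — following \cite{CH} — one cuts the wave integral at a small fixed time $\ep_0$ and estimates the tail $\ep_0\le|t|<\inj(M,g)$: that tail is where the non self-focal hypothesis at $x_0$ is used, via the measure-zero loopset $\mathcal L_{x_0}$ and an upper-semicontinuity/covering argument (cf.\ the remark after Theorem \ref{T:Main}), to bound it by $c\,\ep\,\leb^{n-1+\Omega}$ on a sufficiently small $\mathcal U_\ep$.

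The step I expect to be the main obstacle — and the only genuinely new point relative to the $\Omega=0$ case treated in \cite{CH} — is the uniform bookkeeping of the derivatives: one must check that commuting $\dell_x^I\dell_y^J$ through the oscillatory integral (and, in the tail estimate, through the Fourier-integral-operator representation of $e^{it\sqrt{\Delta_g}}$) amplifies each contribution by \emph{exactly} $\leb^{\Omega}$, that the would-be $\leb^{n-1+\Omega}$ terms cancel precisely against the Bessel integral rather than merely up to $\leb^{n-2+\Omega}$, and that the derivatives do not disturb the smallness coming from $|\mathcal L_{x_0}|=0$. Granting this, the argument of \cite{CH} applies with only notational changes, and combining the near-$0$ part (Bessel integral $+ O(\leb^{n-2+\Omega})$) with the tail ($\le c\,\ep\,\leb^{n-1+\Omega}$) and taking $\leb$ large yields the asserted bound.
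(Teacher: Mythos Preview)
Your overall strategy---differentiate in $\leb$, express $\partial_\leb(\rho*\dell_x^I\dell_y^J E_\leb)$ through the short-time wave group, insert the parametrix, and integrate back---is exactly what the paper does (via Lemma~\ref{L:Smoothed Estimate} with $Q=Id$). But you have misidentified the source of the $c\,\ep\,\leb^{n-1+\Omega}$ term, and this matters.

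First, your claim that the subleading contributions are $O(\leb^{n-2+\Omega})$ \emph{uniformly over a fixed neighborhood}, so that one could take $c=0$, is not correct. The leading amplitude in the parametrix is not the bare constant $(2\pi)^{-n}|g_y|^{-1/2}$ that produces the Bessel integrand; it carries a geometric factor (essentially $\Theta(x,y)^{1/2}$), and after applying $\dell_x^I\dell_y^J$ and subtracting the Bessel piece one is left with a remainder $W_{I,J}(x,y,\mu)$ obeying
\[
|W_{I,J}(x,y,\mu)|\le C\,\mu^{\,n-2+\Omega}\bigl(d_g(x,y)+(1+\mu)^{-1}\bigr),
\]
exactly as in \eqref{E:W long range}. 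Integrating in $\mu$ from $0$ to $\leb$ gives $O\!\bigl(d_g(x,y)\,\leb^{n-1+\Omega}+\leb^{n-2+\Omega}\bigr)$, which is \emph{not} $O(\leb^{n-2+\Omega})$ at off-diagonal points.

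Second, and more importantly, the non self-focal hypothesis plays no role whatsoever in Proposition~\ref{P:Step 1}; the paper even remarks (after Lemma~\ref{L:Smoothed Estimate}) that the lemma requires no such assumption. There is no ``tail'' $\ep_0\le|t|<\inj(M,g)$ to control here: $\hat\rho$ is supported inside the injectivity radius, where the parametrix is already valid, so the loopset $\mathcal L_{x_0}$ never enters. The $c\,\ep\,\leb^{n-1+\Omega}$ term arises simply by choosing $\mathcal U_\ep$ with diameter at most $\ep$, so that $d_g(x,y)\le\ep$ on $\mathcal U_\ep$ and the integrated remainder above becomes $\le c\,\ep\,\leb^{n-1+\Omega}+c_\ep\,\leb^{n-2+\Omega}$. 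The non self-focal hypothesis is used only later, in Proposition~\ref{P:Step 2}, to compare $E_\leb$ with $\rho*E_\leb$.
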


\begin{proposition}\label{P:Step 2}
Let $x_0$ be a non-self focal point. There exists a constant $c$ so that for every $\ep>0$ there exist an open neighborhood $\mathcal U_\ep$ of $x_0$ and a constant $c_\ep$ so that for all multi-indices $I, J$ we have
  \[\sup_{x,y\in \mathcal U_\ep}\abs{\rho*\dell_x^I \dell_y^J E_\leb(x,y)- \dell_x^I \dell_y^J E_\leb(x,y)}\leq c\,\ep \leb^{n-1+\Omega}+c_\ep \leb^{n-2+\Omega}.\]
\end{proposition}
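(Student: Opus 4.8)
\emph{The plan} is to recognize the left-hand side as a Tauberian gap, reduce it by a rescaled polarization to monotone spectral functions on the diagonal, and then run the same non self-focal argument as in the case $j=k=0$ of \cite{CH}. Unwinding the definition, $\rho*\partial_x^I\partial_y^J E_\leb(x,y)=\sum_j \partial^I\varphi_j(x)\,\partial^J\varphi_j(y)\,\Phi(\leb-\leb_j)$ with $\Phi(t)=\int_{-\infty}^t\rho$, so the quantity to be estimated is $\sum_j \partial^I\varphi_j(x)\,\partial^J\varphi_j(y)\,\psi(\leb-\leb_j)$ with $\psi=\Phi-\mathbf 1_{[0,\infty)}$ rapidly decreasing. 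To accommodate $|I|\neq|J|$, set $u_j=\leb_j^{-|I|}\partial^I\varphi_j(x)$, $v_j=\leb_j^{-|J|}\partial^J\varphi_j(y)$ (with $u_0=v_0=0$), so that $\partial^I\varphi_j(x)\partial^J\varphi_j(y)=\tfrac14\leb_j^{\Omega}\big[(u_j+v_j)^2-(u_j-v_j)^2\big]$ and
\[\rho*\partial_x^I\partial_y^J E_\leb(x,y)-\partial_x^I\partial_y^J E_\leb(x,y)=\tfrac14\Big(\rho*\widetilde F^{+}_\leb-\widetilde F^{+}_\leb\Big)-\tfrac14\Big(\rho*\widetilde F^{-}_\leb-\widetilde F^{-}_\leb\Big),\quad \widetilde F^{\pm}_\leb:=\sum_{\leb_j\le\leb}\leb_j^{\Omega}(u_j\pm v_j)^2 .\]
Each $\widetilde F^{\pm}_\leb$ is nondecreasing in $\leb$, and by Hörmander's bound \eqref{E: big O} (applied to $\partial_x^I\partial_x^I E_\leb(x,x)$ and $\partial_y^J\partial_y^J E_\leb(y,y)$) it increases by at most $C_{I,J}\mu^{n-1+\Omega}$ over any unit $\leb$-window; the rescaling by $\leb_j^{-|I|},\leb_j^{-|J|}$ is exactly what produces the exponent $n-1+\Omega$ rather than the useless $n-1+2\max(|I|,|J|)$. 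It therefore suffices to bound $|\rho*\widetilde F_\leb-\widetilde F_\leb|$ for a nondecreasing $\widetilde F=\widetilde F^{\pm}$ with $O(\mu^{n-1+\Omega})$ unit-window increments by $c\ep\leb^{n-1+\Omega}+c_\ep\leb^{n-2+\Omega}$, uniformly for $x,y$ near $x_0$.

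The content, and the only place $x_0$ non self-focal is used, is the following estimate for the smoothed density. Fix a scale-$\epsilon$ mollifier $\rho_\epsilon$ with $\widehat{\rho_\epsilon}$ supported in $|t|<\inj(M,g)/\epsilon=:T$ and $\widehat{\rho_\epsilon}\equiv 1$ near $0$. Then for each fixed $\epsilon$ one has
\[(\rho_\epsilon* d\widetilde F)(\mu)\ \le\ C_{I,J}\,\mu^{n-1+\Omega}\qquad (\mu\ \text{large}),\]
with $C_{I,J}$ independent of $\epsilon$ and the bound uniform for $x,y$ in a sufficiently small neighborhood of $x_0$. I would prove this exactly as in \cite{CH} (following \cite{SZ, Saf}), using the parametrix for the half-wave group $e^{it\sqrt{\Delta_g}}$: writing $(\rho_\epsilon* d\widetilde F)(\mu)$ as $\tfrac1{2\pi}\int\widehat{\rho_\epsilon}(t)e^{i\mu t}\big[\sum_j\leb_j^{\Omega}(u_j\pm v_j)^2e^{-i\leb_j t}\big]\,dt$, the bracket is a sum of diagonal restrictions of $\partial_x^I\partial_x^I(\Delta^{(|J|-|I|)/2}e^{-it\sqrt{\Delta_g}})$, $\partial_y^J\partial_y^J(\Delta^{(|I|-|J|)/2}e^{-it\sqrt{\Delta_g}})$ and $\pm 2\,\partial_x^I\partial_y^J e^{-it\sqrt{\Delta_g}}(x,y)$. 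For $|t|<\inj(M,g)$ the Hadamard construction and stationary phase give a main term of size $\asymp\mu^{n-1+\Omega}$ (the differentiations insert symbols homogeneous of degrees matching $\Omega$, which is why the exponents balance), while for $\inj(M,g)\le|t|\le T$ the contribution is $o_T(\mu^{n-1+\Omega})$ because the singular support of $e^{-it\sqrt{\Delta_g}}(x,y)$ for $x,y$ near $x_0$ is governed by the loop set of $x_0$, which has measure zero; that this smallness persists uniformly on a small enough neighborhood of $x_0$ is the usual upper-semicontinuity (Fatou) argument on the loop flow.

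Granting this density bound, the Tauberian theorem (Safarov--Vassiliev, cf.\ \cite{SV}) gives, since $\widetilde F$ is monotone, $|\rho_\epsilon*\widetilde F_\leb-\widetilde F_\leb|\le C\epsilon\leb^{n-1+\Omega}$. It remains to pass from the scale-$\epsilon$ smoothing back to $\rho$: the difference $\rho*\widetilde F_\leb-\rho_\epsilon*\widetilde F_\leb$ equals $\sum_j\leb_j^{\Omega}(u_j\pm v_j)^2(\Phi-\Phi_\epsilon)(\leb-\leb_j)$, and $\widehat{\Phi-\Phi_\epsilon}$ is smooth, compactly supported, and vanishes near $0$ (both smoothings agree there), hence is supported in a fixed compact annulus $\inj(M,g)/2\le|t|\le T$ away from the origin; so the very same wave-kernel/non self-focal input bounds it by $o_\epsilon(\leb^{n-1+\Omega})$, again uniformly near $x_0$ (this is also where Proposition \ref{P:Step 1} enters, via its short-time $O(\leb^{n-2+\Omega})$ estimate for the $\rho$-smoothing). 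Altogether $|\rho*\widetilde F_\leb-\widetilde F_\leb|\le C_{I,J}\epsilon\,\leb^{n-1+\Omega}+o_\epsilon(\leb^{n-1+\Omega})+O(\leb^{n-2+\Omega})$: given $\ep>0$ first choose $\epsilon$ so small that $C_{I,J}\epsilon<\ep/4$, then shrink the neighborhood of $x_0$ and enlarge $\Lambda$ so that the $o_\epsilon$-term is $<\tfrac{\ep}{4}\leb^{n-1+\Omega}$ for $\leb>\Lambda$; summing over the two signs yields Proposition \ref{P:Step 2}.

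The step I expect to be the main obstacle is the second one: the uniform non self-focal estimate for the smoothed, twice-differentiated spectral function, showing the $\inj(M,g)\le|t|\le T$ part is $o(\mu^{n-1+\Omega})$ on a neighborhood of $x_0$ that may be taken arbitrarily small. This is exactly the mechanism behind Theorem 1 of \cite{CH} (and \cite{SZ, Saf}), but with two extra layers of bookkeeping: the Fourier integral operators $\partial_x^I\partial_y^J e^{it\sqrt{\Delta_g}}(x,y)$ now have order raised by $\Omega$ and this must be tracked through every stationary-phase estimate (harmless by homogeneity, and the reason the rescaling in Step 1 restores the clean exponent), and the neighborhood on which the loop set has small measure must be chosen uniformly. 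The polarization and the appeal to the Tauberian theorem are purely formal.
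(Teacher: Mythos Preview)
Your approach is correct in outline and genuinely different from the paper's, though it rests on the same analytic input. The paper proceeds via the Sogge--Zelditch microlocal decomposition $E_\leb = E_\leb B_\ep^* + E_\leb C_\ep^*$, where $B_\ep$ has small principal symbol (supported near loop directions) and $U(t)C_\ep^*$ is smoothing for $\tfrac{1}{2}\inj(M,g)<|t|<1/\ep$; the $B_\ep$ piece is then handled by a \emph{non-monotone} Tauberian theorem with the smallness coming from Cauchy--Schwarz and the $\ep$-small symbol, while the $C_\ep$ piece is made monotone by adding diagonal terms and handled by a monotone Tauberian theorem, the key input being that the relevant long-time piece is smoothing. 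Your route instead polarizes first via $u_j=\leb_j^{-|I|}\partial^I\varphi_j(x)$, $v_j=\leb_j^{-|J|}\partial^J\varphi_j(y)$ to obtain monotone functions $\widetilde F^\pm$ from the outset, and replaces the microlocal split by a time-scale split $\rho$ vs.\ $\rho_\epsilon$ in the Safarov style. What you gain is a cleaner reduction to a single monotone Tauberian step and avoidance of the commutator bookkeeping $[\partial^J,B_\ep]\in\Psi^{|J|-2}$ that the paper needs; what the paper gains is that the microlocal cutoffs $B_\ep,C_\ep$ make the off-diagonal long-time smallness completely explicit (smoothing), whereas your step bounding $(\rho-\rho_\epsilon)*\widetilde F^\pm$ requires the off-diagonal statement that $\partial_x^I\partial_y^J U(t)(x,y)$ is uniformly negligible for $\inj/2\le|t|\le T$ and $x,y$ near $x_0$---which, when made precise, is essentially the content of the $B_\ep/C_\ep$ construction (Lemma~15 of \cite{CH}) you are implicitly invoking under the name ``upper-semicontinuity (Fatou) argument.''

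Two technical points to tighten: (i) the simultaneous requirements $\rho_\epsilon\ge 0$ (for the standard monotone Tauberian) and $\hat\rho_\epsilon\equiv 1$ near $0$ (so that $\widehat{\Phi-\Phi_\epsilon}$ is supported away from $0$) are in tension; you can drop the latter and absorb the short-time discrepancy using Proposition~\ref{P:Step 1} as you hint, or keep the latter and use a Tauberian theorem for not-necessarily-positive mollifiers (as in \cite[Lemma~16]{CH}); (ii) your smoothed density bound only needs the diagonal terms (by $(u\pm v)^2\le 2u^2+2v^2$), so the non self-focal input there is purely on-diagonal, but the $(\rho-\rho_\epsilon)*\widetilde F^\pm$ step genuinely needs the off-diagonal long-time estimate, and you should say explicitly how the neighborhood of $x_0$ is chosen (depending on $T$) to make this uniform.
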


The proof of Proposition \ref{P:Step 2} hinges on the fact that $x_0$ is a non self-focal point. Indeed, for each $\ep>0$, Lemma 15 in \cite{CH} (which is a generalization of Lemma 3.1 in \cite{SZ}) yields the existence of a neighborhood $\mathcal O_\ep$ of $x_0$, a function $\psi_\ep \in C^\infty_c(M)$ and operators $B_\ep,C_\ep\in \Psi^0(M)$ supported in $\mathcal O_\ep$ satisfying both:
\begin{align}
&\bullet  \supp(\psi_\ep) \subset \mathcal O_\ep  \;\; \text{and} \;\; \psi_\ep=1\; \text{ on a neighborhood of}\,  x_0, \label{E: O ep}\\
&\bullet  B_\ep+C_\ep=\psi_\ep^2. \label{E: B and C}
\end{align}
The operator $B_\ep$ is built so that it is microlocally supported on the set of cotangent directions that generate geodesic loops at $x_0$. Since $x_0$ is non self-focal, the construction can be carried so that the principal symbol $b_0(x,\xi)$ satisfies $\|b_0(x, \cdot)\|_{L^2(B^*_xM)} \leq \ep$ for all $x \in M$. The operator $C_\ep$ is built so that 
$U(t)C_\ep^*$ is a smoothing operator for $\;\tfrac{1}{2}{\inj(M,g)}<\abs{t}<\frac{1}{\ep}.$
In addition, the principal symbols of $B_\ep$ and $C_\ep$  are real valued and their sub-principal symbols vanish in a neighborhood of $x_0$ (when regarded as operators acting on half-densities). 

In what follows we use the construction above to decompose $E_\leb$, up to an $O(\leb^{-\infty})$ error, as
\begin{equation}\label{E: B+C}
  E_\leb(x,y)= E_\leb B_\ep^*(x,y)+E_\leb C_\ep^*(x,y)
\end{equation}
for all $x,y$ sufficiently close to $x_0$. This decomposition is valid since  $\psi_\ep\equiv 1$ near $x_0$.
\subsection{Proof of Proposition \ref{P:Step 1}} The proof of Proposition \ref{P:Step 1} consists of writing $$\rho*\dell_x^I \dell_y^J E_\leb(x,y)=\int_0^\leb \partial_\mu(\rho*\dell_x^I \dell_y^J E_\mu(x,y)) \, d\mu,$$ and on finding an estimate for $\partial_\mu(\rho*\dell_x^I \dell_y^J E_\mu(x,y))$. Such an estimate is given in Lemma \ref{L:Smoothed Estimate}, which is stated for the more general case $\partial_\mu(\rho*\dell_x^I \dell_y^J E_\mu Q^*(x,y))$ with $Q \in \{Id, B_\ep, C_\ep\}$ that is needed in the proof of Proposition \ref{P:Step 2}. 

\begin{lemma}\label{L:Smoothed Estimate}
Let $(M,g)$ be a compact, smooth, Riemannian manifold of dimension $n\geq 2$, with no boundary. Let $Q\in \set{Id,B_\ep,C_\ep}$ have principal symbol $D_0^Q.$ Consider $\rho$ as in \eqref{E:Rho Def}, and define
\[\Omega=\abs{I}+\abs{J}.\]
Then, for all $x,y \in M$ with $d_g(x,y)\leq \tfrac{1}{2}\inj(M,g),$ all multi-indices $I,J,$ and all $\mu \geq 1$, we have 
 \begin{align}
&\partial_\mu(\rho* \dell_x^I \dell_y^J E_\mu Q^*)(x,y)\notag \\
\quad &= \frac{\mu^{n-1}}{(2\pi)^n }\dell_x^I \dell_y^J\lr{\int_{S_y^*M}e^{i \mu \langle \exp_y^{-1}(x),\w \rangle_{g_y}} \lr{D_0^Q(y,\w)+\mu^{-1}D_{-1}^Q(y,\w)} \frac{d\omega}{\sqrt{|g_y|}}}\notag\\
&+ W_{I,J}(x,y,\mu). \label{E:Smoothed Est 1}
\end{align}  
Here, $d\w$ is the Euclidean surface measure on $S_y^*M,$ and $D_{-1}^Q $ is a homogeneous symbol of order  $-1$. The latter satisfy
\begin{equation}\label{E:K rln}
D_{-1}^{B_\ep}(y,\cdot)+D_{-1}^{C_\ep}(y,\cdot)=0 \qquad \forall \,y\in \mathcal O_\ep,
\end{equation}
where $\mathcal O_\ep$ is as in \eqref{E: O ep}. 
Moreover, there exists $C>0$ so that for every $\ep>0$
\begin{equation}\label{E:K rln}
\sup_{x,y\in \mathcal O_\ep}\abs{\int_{S_y^*M}e^{i \langle \exp_y^{-1}(x),\w \rangle_{g_y}} {D_{-1}^Q(y,\w)} \frac{d\omega}{\sqrt{|g_y|}}}\leq C\, \ep.
\end{equation}
Finally, $W_{I,J}$ is a smooth function in $(x,y)$ for which there exists $C>0$ such that for all $x,y$ satisfying $d_g(x,y)\leq \frac{1}{2}\inj(M,g)$ and all $\mu >0$
\begin{equation}\label{E:W long range}
|W_{I,J}(x,y,\mu)| \leq C  \mu^{n-2+\Omega}\lr{d_g(x,y)+ (1+\mu)^{-1}}.
\end{equation}
\end{lemma}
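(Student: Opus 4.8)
The starting point is the Fourier integral representation of the wave kernel. Since $\supp(\hat\rho)$ lies inside the injectivity radius, the smoothed spectral measure $\partial_\mu(\rho * E_\mu Q^*)$ can be written, via $E_\mu Q^* = \frac{1}{2\pi}\int \hat\rho(t) e^{-i t \mu} \, U(t) Q^*\, dt$-type identities (i.e.\ $\partial_\mu (\rho * E_\mu Q^*)(x,y)=\frac{1}{2\pi}\int_{\mathbb{R}}\hat\rho(t) e^{it\mu}(\cos(t\sqrt{\Delta_g})Q^*)(x,y)\,dt$ up to the standard even/odd bookkeeping), as an honest oscillatory integral. Using Hörmander's parametrix for $U(t)Q^*$ on the support of $\hat\rho$, one obtains a representation of $\partial_\mu(\rho*E_\mu Q^*)(x,y)$ as $\mu^{n-1}$ times an oscillatory integral over $S_y^*M$ with phase $\langle\exp_y^{-1}(x),\omega\rangle_{g_y}$, amplitude expanded in inverse powers of $\mu$ with leading term the principal symbol $D_0^Q$, next term $\mu^{-1}D_{-1}^Q$, and a remainder from truncating the symbol expansion. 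This is exactly the content of \eqref{E:Smoothed Est 1}, with $W_{I,J}$ collecting (a) the contribution of the lower-order terms in the symbol expansion and (b) the error from the parametrix/$O(\mu^{-\infty})$ tails.

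Next I would differentiate in $x$ and $y$. The phase is linear in $\omega$ so $\partial_x$ brings down a factor $\mu \, \partial_x\langle\exp_y^{-1}(x),\omega\rangle$ (bounded times $\mu$), and each derivative landing on the amplitude or the Jacobian $|g_y|^{-1/2}$ costs at most a bounded factor; hence $\Omega=|I|+|J|$ derivatives produce the advertised gain of $\mu^\Omega$, with the main term keeping its clean form $\partial_x^I\partial_y^J$ of the $S_y^*M$-integral (for the $D_0^Q$ and $\mu^{-1}D_{-1}^Q$ pieces) and the rest absorbed into $W_{I,J}$. The estimate \eqref{E:W long range} then follows by two observations: away from the diagonal ($d_g(x,y)$ bounded below), non-stationary phase in $\omega$ gives decay, while near the diagonal the bound $|W_{I,J}|\lesssim \mu^{n-2+\Omega}(d_g(x,y)+(1+\mu)^{-1})$ comes from the fact that the leading $\mu^{n-1+\Omega}$ term has already been extracted, so what remains is either one order lower in $\mu$ (the $\mu^{-\infty}$ and symbol-expansion tails) or carries an explicit factor of $d_g(x,y)$ coming from Taylor-expanding the amplitude and phase corrections at $x=y$.

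The symbol identities \eqref{E:K rln} follow from the structural properties of $B_\ep,C_\ep$ recorded before the lemma: since $B_\ep+C_\ep=\psi_\ep^2$ with $\psi_\ep\equiv 1$ near $x_0$, the principal symbols satisfy $D_0^{B_\ep}+D_0^{C_\ep}=1$ near $x_0$, and because the sub-principal symbols of $B_\ep$ and $C_\ep$ vanish near $x_0$ (as operators on half-densities), the order $-1$ terms in the parametrix amplitude — which are built from the sub-principal symbol and transport-equation data — cancel: $D_{-1}^{B_\ep}+D_{-1}^{C_\ep}=0$ on $\mathcal O_\ep$. Finally, the pointwise bound \eqref{E:K rln} (second occurrence) reduces to showing $\|D_{-1}^Q(y,\cdot)\|_{L^1(S_y^*M)}\lesssim\ep$; for $Q=B_\ep$ this follows from the construction that keeps $\|b_0(x,\cdot)\|_{L^2(B_x^*M)}\le\ep$ together with the fact that $D_{-1}^{B_\ep}$ is a fixed $\Psi$DO-functional of $b_0$ and its derivatives (Cauchy--Schwarz on the sphere), and then for $Q=C_\ep$ it follows from \eqref{E:K rln} (first occurrence). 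The main obstacle I anticipate is bookkeeping the parametrix construction carefully enough to see that \emph{exactly one} power of $\mu$ is lost per derivative and that the residual amplitude genuinely carries the factor $d_g(x,y)+(1+\mu)^{-1}$ uniformly — i.e.\ tracking constants through Hörmander's parametrix while differentiating under the oscillatory integral, rather than any single conceptual difficulty.
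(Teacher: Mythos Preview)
Your proposal is correct and follows essentially the same route as the paper: represent $\partial_\mu(\rho*E_\mu Q^*)$ via $\frac{1}{2\pi}\int e^{it\mu}\hat\rho(t)\,U(t)Q^*(x,y)\,dt$ with $U(t)=e^{-it\sqrt{\Delta_g}}$, insert a short-time parametrix for the half-wave group, and track how the derivatives $\partial_x^I\partial_y^J$ propagate through the resulting oscillatory integral. The paper's own proof is in fact just a pointer to \cite[Proposition~12]{CH} together with a list of the modifications needed when derivatives are present; your plan is a faithful unpacking of what that reference contains.

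One refinement worth noting: where you say the residual amplitude ``carries an explicit factor of $d_g(x,y)$ coming from Taylor-expanding the amplitude and phase corrections at $x=y$,'' the paper isolates the precise mechanism---the density factor $\Theta(x,y)^{1/2}$ in the parametrix satisfies $\partial_x\Theta^{1/2},\,\partial_y\Theta^{1/2}=O(d_g(x,y))$, which is exactly what produces the $d_g(x,y)$ gain in \eqref{E:W long range} after differentiation. Making this explicit will save you some bookkeeping.
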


\begin{remark}
  Note that Lemma \ref{L:Smoothed Estimate} does not assume that $x,y$ are near an non self-focal point. 
\end{remark}

\begin{remark}\label{R:other Q}
We note that Lemma \ref{L:Smoothed Estimate} is valid for more general operators $Q$. Indeed, if $Q \in \Psi^k(M)$ has vanishing subprincipal symbol (when regarded as an operator acting on half-densities), then 
\eqref{E:Smoothed Est 1} holds with $D_0^Q(y, \w)$ substituted by $\mu^k D_k^Q(y, \w)$ and with $\mu^{-1}D_{-1}^Q(y,\w)$ substituted by $\mu^{k-1}D_{k-1}^Q(y,\w)$. Here, $D_k^Q$ is the principal symbol of $Q$ and  $D_{k-1}^Q$ is a homogeneous polynomial of degree $k-1$. In this setting, the error term satisfies $|W_{I,J}(x,y,\mu)| \leq C  \mu^{n+k-2+\W}\lr{d_g(x,y)+ (1+\mu)^{-1}}$.
\end{remark}
\begin{proof}[Proof of Lemma \ref{L:Smoothed Estimate}]
We use that
\begin{equation}\label{E: convolution}
\partial_\mu (\rho * EQ^*)(x,y,\leb)= \frac{1}{2\pi} \int_{-\infty}^{+\infty} e^{it \lambda} \hat \rho (t) \,U(t) Q^*(x,y) dt,
\end{equation}
where $Q\in \Psi(M)$ is any pseudo-differential operator and $U(t)=e^{-it\sqrt{\Delta_g}}$ is the half-wave propagator. The argument from here is identical to that of \cite[Proposition 12]{CH}, which relies on a parametrix for the half-wave propagator for which the kernel can be controlled to high accuracy when $x$ and $y$ are close to the diagonal. The main corrections to the proof of \cite[Proposition 12]{CH} are that $\dell_x^I\dell_y^J$ gives an $O(\mu^{n-3+\Omega})$ error in equations (54) and (60), and gives an $O(\mu^{n-1})$ error in (59). We must also take into account that $\dell_x\Theta(x,y)^{1/2}$ and $\dell_y \Theta(x,y)^{1/2}$ are both $O(d_g(x,y)).$ 
\end{proof}

\begin{proof}[Proof of Proposition \ref{P:Step 1}]
Following the technique for proving \cite[Proposition 7]{CH}, we obtain Proposition \ref{P:Step 1} by applying Lemma \ref{L:Smoothed Estimate} to $Q=Id$  (this gives $D_0^{Id}=1$ and $D_{-1}^{Id}=0$) and integrating the expression in \eqref{E:Smoothed Est 1} from $\mu=0$ to $\mu=\leb.$
One needs to choose $\mathcal U_\ep$ so that its diameter is smaller than $\ep$, since this makes $\int_0^\leb W_{I,J}(x,y,\mu) d\mu=O(\ep \leb^{n-1+\Omega}+ \leb^{n-2+\Omega})$ as needed. One also uses identity \eqref{E:Bessel} to obtain the exact statement in Proposition \ref{P:Step 1}.
\end{proof}

\subsection{Proof of Proposition \ref{P:Step 2}}
As in \eqref{E: B+C}, 
\[  E_\leb(x,y)= E_\leb B_\ep^*(x,y)+E_\leb C_\ep^*(x,y)+O\lr{\leb^{-\infty}}\]
for all $x,y$ sufficiently close to $x_0.$ Proposition \ref{P:Step 2} therefore reduces to showing that there exist a constant $c$ independent of $\ep,$ a constant $c_\ep=c_\ep(I,J,x_0)$, and a neighborhood $\mathcal U_\ep$ of $x_0$ such that
\begin{align}
\label{E:Step 2B} \sup_{x,y\in \mathcal U_\ep}\abs{ \dell_x^I \dell_y^J E_\leb B_\ep^*(x,y)-\rho*\dell_x^I \dell_y^J E_\leb B_\ep^*(x,y)}&\leq c\,\ep \leb^{n-1+\Omega}+c_\ep \leb^{n-2+\Omega},
\shortintertext{and}
\label{E:Step 2C} \sup_{x,y\in \mathcal U_\ep}\abs{ \dell_x^I \dell_y^J E_\leb C_\ep^*(x,y)-\rho*\dell_x^I \dell_y^J E_\leb C_\ep^*(x,y)}&\leq c\,\ep \leb^{n-1+\Omega}+c_\ep \leb^{n-2+\Omega}.
\end{align}
Our proofs of \eqref{E:Step 2B} and \eqref{E:Step 2C} use that these estimates hold on diagonal when $\abs{I}=\abs{J}=0$ (i.e. no derivatives are involved). This is the content of the following result, which was proved in \cite{SZ} for $Q=Id$. Its proof extends without modification to general $Q \in \Psi^0(M).$

\begin{lemma}[Theorem 1.2 and Proposition 2.2 in \cite{SZ}]\label{L:PWL}
Let $Q\in \Psi^0(M)$ have real-valued principal symbol $q$. Fix a non-self focal point $x_0\in M$ and write $\sigma_{sub}(QQ^*)$ for the subprincipal symbol of $QQ^*$ (acting on half-densities). Then, there exists $c>0$ so that for every $\ep>0$ there exist a neighborhood $\mathcal O_\ep $ and a constant $C_\ep$ making
\[QE_\leb Q^*(x,x)
= \lr{2\pi}^{-n} \int_{|\xi|_{g_x}<\leb}\lr{ |q(x,\xi)|^2+\sigma_{sub}(QQ^*)(x,\xi)}\; \frac{d\xi}{\sqrt{\abs{g_x}}} +R_Q(x,\leb),\]
with 
\[\sup_{x\in \mathcal U}\abs{R_Q(x,\leb)}\leq c\, \ep \leb^{n-1}+C_\ep \leb^{n-2}\]
for all $\leb \geq 1.$
\end{lemma}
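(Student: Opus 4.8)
This is \cite[Theorem~1.2, Proposition~2.2]{SZ} in the case $Q=\mathrm{Id}$, and the plan is to rerun that argument verbatim while carrying the operator $Q$ along and checking that it causes no trouble. Fix $\rho$ as in \eqref{E:Rho Def} and write $N_Q(\leb,x)=QE_\leb Q^*(x,x)=\sum_{\leb_j\leq\leb}\abs{Q\varphi_j(x)}^2$, so that $\partial_\leb N_Q$ is a positive measure in $\leb$ whose inverse Fourier transform is the diagonal kernel of $QU(t)Q^*$, up to the constant-in-$t$ contribution of the zero mode (which is harmless since $\rho\in\mathcal S(\R)$). The proof then splits into the usual two steps.

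\emph{Step 1: the $\rho$-smoothed asymptotics.} Since $\supp\hat\rho\subset(-\inj(M,g),\inj(M,g))$, the identity $\partial_\leb(\rho*N_Q)(\leb,x)=\tfrac{1}{2\pi}\int e^{it\leb}\hat\rho(t)\,QU(t)Q^*(x,x)\,dt$ only involves $U(t)$ below the injectivity radius, where H\"ormander's short-time parametrix is available. Composing that parametrix with $Q$ and $Q^*$ via the symbol calculus and applying stationary phase in $\xi$ produces an amplitude whose leading term is $\abs{q(x,\xi)}^2$ and — because $q$ is real-valued and subprincipal symbols are computed on half-densities — whose next term contributes $\sigma_{sub}(QQ^*)(x,\xi)$, with all remaining terms of lower order; this is exactly the general-$Q$ version of Lemma~\ref{L:Smoothed Estimate} with $I=J=0$ (cf.\ Remark~\ref{R:other Q}, now also retaining the subprincipal contribution). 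Because $x=y$ on the diagonal kills the $d_g(x,y)$ factor in \eqref{E:W long range}, integrating from $0$ to $\leb$ yields, uniformly for $x$ near $x_0$,
\[
(\rho*N_Q)(\leb,x)=(2\pi)^{-n}\!\int_{\abs{\xi}_{g_x}<\leb}\!\lr{\abs{q(x,\xi)}^2+\sigma_{sub}(QQ^*)(x,\xi)}\frac{d\xi}{\sqrt{\abs{g_x}}}+O(\leb^{n-2}).
\]
Nothing in this step uses that $x_0$ is non self-focal.

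\emph{Step 2: removing the smoothing.} It remains to bound $\abs{N_Q(\leb,x)-(\rho*N_Q)(\leb,x)}$ by $c\,\ep\leb^{n-1}+C_\ep\leb^{n-2}$ for $x$ near $x_0$, and this is the only place the hypothesis enters. I would invoke the construction of \cite[Lemma~15]{CH} to produce $\psi_\ep,B_\ep,C_\ep$ as in \eqref{E: O ep}--\eqref{E: B and C}, so that for $x$ near $x_0$ (where $\psi_\ep\equiv1$) one has $N_Q(\leb,x)=QE_\leb Q^*B_\ep(x,x)+QE_\leb Q^*C_\ep(x,x)+O(\leb^{-\infty})$, and likewise after convolving with $\rho$. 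A Fourier Tauberian theorem (as in \cite[\S3]{SZ}, cf.\ \cite{SV}) bounds the de-smoothing error of each piece by $\leb^{n-1}/T$ plus the contribution of its wave kernel on $\tfrac12\inj(M,g)\leq\abs{t}\leq T$. Taking $T=1/\ep$: for the $C_\ep$ piece $U(t)C_\ep^*$ is smoothing on $\tfrac12\inj(M,g)<\abs{t}<1/\ep$, so only the $\ep\leb^{n-1}$ term survives; for the $B_\ep$ piece the long-time kernel $QU(t)Q^*B_\ep(x,x)$ is, to leading order, supported on cotangent directions generating geodesic loops at $x$ and is controlled by $\norm{b_0(x,\cdot)}_{L^2(B_x^*M)}$, which the construction keeps $\leq\ep$ uniformly precisely because $x_0$ is non self-focal, giving a $\lesssim\ep\leb^{n-1}$ bound. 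Since $Q\in\Psi^0(M)$ enters only as a fixed operator bounded on $L^2$ with the stated principal symbol, none of these estimates differs from the $Q=\mathrm{Id}$ case. Combining the two steps proves the lemma.

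\emph{Main obstacle.} The substantive input is the long-time wave-kernel control in Step 2 via the non self-focal hypothesis, but that part is taken verbatim from \cite{SZ,CH}. The one genuinely new piece of bookkeeping is the symbol computation in Step 1 pinning down the order-$\leb^{n-1}$ correction as $(2\pi)^{-n}\int_{\abs{\xi}_{g_x}<\leb}\sigma_{sub}(QQ^*)$: one must verify that $q$ being real makes the relevant self-Poisson-bracket terms in the composition $QU(t)Q^*$ drop out, and that, with the half-density normalization, $\sigma_{sub}(QQ^*)$ is real, so that the main term is real and nonnegative as it must be.
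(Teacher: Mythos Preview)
The paper gives no proof of this lemma: it simply states that the case $Q=\mathrm{Id}$ is \cite[Theorem~1.2, Proposition~2.2]{SZ} and that ``its proof extends without modification to general $Q\in\Psi^0(M)$.'' Your proposal therefore goes well beyond what the paper does, and your Step~1 is exactly right and matches what the paper would mean by ``rerun the argument with $Q$ carried along.''

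Your Step~2, however, takes a detour that is both unnecessary and slightly shaky. The key observation you are not exploiting is that $N_Q(\leb,x)=\sum_{\leb_j\leq\leb}\abs{Q\varphi_j(x)}^2$ is already a nondecreasing function of $\leb$, just as in the $Q=\mathrm{Id}$ case. Hence the monotone Fourier Tauberian theorem used in \cite{SZ} applies directly to $N_Q$ itself: the de-smoothing error is controlled by $\leb^{n-1}/T$ plus the long-time wave kernel $QU(t)Q^*(x,x)$ on $\tfrac12\inj(M,g)\leq\abs{t}\leq T$, and since $Q\in\Psi^0(M)$ does not move the wave front set, the non self-focal hypothesis handles the latter exactly as in \cite{SZ}. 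This is presumably what the paper means by ``extends without modification.'' By contrast, once you split $N_Q$ through $B_\ep$ and $C_\ep$, the individual pieces $QE_\leb B_\ep^*Q^*(x,x)$ and $QE_\leb C_\ep^*Q^*(x,x)$ are no longer monotone, so the Tauberian theorem you cite from \cite[\S3]{SZ} does not apply to them separately; one would instead need the non-monotone Tauberian machinery of \cite[Lemmas~16--17]{CH}, which is precisely what the present paper invokes for the genuinely harder off-diagonal derivative estimates in Proposition~\ref{P:Step 2}. So your Step~2 is not wrong in spirit, but it imports heavier tools than required and, as written, has a gap where monotonicity is tacitly assumed for pieces that lack it.
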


We prove relation  \eqref{E:Step 2B} in Section \ref{S:2B} and relation \eqref{E:Step 2C} in Section \ref{S:2C}.

%==========================================================
\subsubsection{Proof of relation  \eqref{E:Step 2B}}\label{S:2B}
Define 
\[g_{I,J}(x,y,\leb):= \dell_x^I \dell_y^J E_\leb B_\ep^*(x,y)-\rho*\dell_x^I \dell_y^J E_\leb B_\ep^*(x,y).\]
Note that  $g_{I,J}(x,y,\cdot)$ is a piecewise continuous function. We aim to find $c, c_\ep$ and $\mathcal U_\ep$ so that $x_0\in \mathcal U_\ep$ and
\begin{equation}\label{E: g}
\sup_{x,y\in \mathcal U_\ep}\abs{g_{I,J}(x,y,\leb)}\leq c\,\ep \leb^{n-1+\Omega}+c_\ep \leb^{n-2+\Omega}.
\end{equation}
By \cite[Lemma 17]{CH}, which is a Tauberian Theorem for non-monotone functions, relation \eqref{E: g} reduces to checking the following two conditions:
\begin{align}
 &\bullet \mathcal F_{\leb \to t}{( g_{I,J})}(x,y,t)=0 \;\; \text{ for all} \;\; \abs{t}<\frac{1}{2}\inj(M,g), \label{E: cond1} \\
 &\bullet  \sup_{x,y \in \mathcal U_\ep}\sup_{s\in [0,1]}\abs{g_{I,J}(x,y,\leb+s)-g_{I,J}(x,y,\leb)} \leq c\,\ep \leb^{n-1+\Omega}+c_\ep \leb^{n-2+\Omega} \label{E: cond2} .
\end{align}
By construction, $\mathcal F_{\leb \to t}{(\dell_\leb g_{I,J})}(x,y,t)=(1-\hat \rho(t))\dell_x^I \dell_y^J U(t) B_\ep^*(x,y) =0$ for all $\abs{t}<\frac{1}{2}\inj(M,g).$ Hence, since $\mathcal F_{\leb \to t}({g_{I,J}})$ is continuous at $t=0,$ we  have \eqref{E: cond1}. To prove \eqref{E: cond2} we write
\begin{align}
&\notag \sup_{s\in [0,1]}\abs{g_{I,J}(x,y,\leb+s)-g_{I,J}(x,y,\leb)} \\
&\hspace{2cm}\leq  \sup_{s\in [0,1]} \abs{\dell_x^I \dell_y^J E_{(\leb,\leb+s]}B_\ep^*(x,y)}
+\sup_{s\in [0,1]}\abs{\rho*\dell_x^I \dell_y^J E_{(\leb,\leb+s]} B_\ep^*(x,y)}.  \label{E: bound for g}
\end{align}
The second term in \eqref{E: bound for g} is bounded above by the right hand side of \eqref{E: cond2} by Lemma \ref{L:Smoothed Estimate}. To bound the first term, use Cauchy-Schwartz to get
\begin{align*}
\sup_{s\in [0,1]} \abs{ \dell_x^I \dell_y^J [E_{(\leb,\leb+s]}B_\ep^*(x,y)]}
&=\sup_{s\in [0,1]} \Big|\sum_{\leb_j\in (\leb, \leb+1]} \dell_x^I \varphi_j(x) \cdot \dell_y^J B_\ep \varphi_j(y)   \Big|\\
&\leq \sum_{\leb_j\in (\leb, \leb+1]} \abs{\left[\lr{B_\ep \dell_y^J + [\dell_y^J, B_\ep]}\varphi_j(y)\right]}\cdot \abs{\dell_x^I \varphi_j(x)}.
\end{align*}
Write $b_0$ for the principal symbol of $B_\ep.$ By construction, for all $y$ in a neighborhood of $x_0,$ we have $\partial_y b_0(y,\xi)=0$. Therefore, $\sigma_{\abs{J}-1}\lr{[\dell_y^J, B_\ep]}=i^{\abs{J}}\{\xi^J, b_0(y,\xi)\}=0$ and we conclude that $[\dell_y^J, B_\ep] \in \Psi^{\abs{J}-2}$. Thus, by the usual pointwise Weyl Law (e.g. \cite[Equation (2.31)]{SZII}), 
\begin{align*}
\sup_{s\in [0,1]} \abs{ \dell_x^I \dell_y^J [E_{(\leb,\leb+s]}B_\ep^*(x,y)]}&\leq \sum_{\leb_j\in (\leb, \leb+1]} \abs{B_\ep \dell_y^J\varphi_j(y)}\cdot \abs{\dell_x^I \varphi_j(x)}+O(\leb^{n-3+\W})
\end{align*}
Next, define for each multi-index $K \in \mathbb N^n$ the order zero pseudo-differential operator
\[P_K:= \partial^K \Delta_g^{-|K|/2}.\]
Using Cauchy-Schwarz and that $\partial^K \varphi_j = \leb_j^{\abs{K}} P_K \varphi_j,$ we find
\begin{align*}
 & \sum_{\leb_j\in (\leb, \leb+1]} \abs{B_\ep \dell_y^J\varphi_j(y)}\cdot \abs{\dell_x^I \varphi_j(x)}\\
& \qquad\qquad \leq (\leb+1)^{\Omega}[(B_\ep P_J) E_{(\leb,\leb+1]} (B_\ep P_J)^*(y,y)]^\frac{1}{2} [P_I E_{(\leb,\leb+1]} P_I^*(x,x)]^\frac{1}{2} .
\end{align*}
Again using the pointwise Weyl Law (see \cite[Equation (2.31)]{SZII}), we have $[P_I E_{(\leb,\leb+1]} P_I^*(x,x)]^\frac{1}{2}$ is $O(\leb^{\frac{n-1}{2}}).$ Next, since according to the construction of $B_\ep$ we have
\[\sup_{x \in \mathcal U_\ep}\|b_0(x, \cdot)\|_{L^2(B^*_xM)} \leq \ep\] 
and $\partial_x b_0(x,\xi)=0$ for $x$ in a neighborhood $\mathcal U_\ep$ of $x_0$,  we conclude that
\[\sup_{x \in \mathcal U_\ep}\|\sigma_{sub}(B_\ep P_J(B_\ep P_J)^*)(x, \cdot)\|_{L^2(B^*_xM)} \leq \ep^2.\] 
Proposition \ref{L:PWL} therefore shows that there exists  $c>0$ making
\begin{equation}\label{E: no commutator}
  \sup_{x,y \in \mathcal U_\ep} \abs{(B_\ep P_J) E_{(\leb,\leb+1]} (B_\ep P_J)^*(y,y)}^{\frac{1}{2}}  \leq  c {\ep} \leb^{\frac{n-1}{2}}.
\end{equation}
This proves \eqref{E: cond2}, which together with \eqref{E: cond1} allows us to conclude \eqref{E: g}.

\subsubsection{Proof of relation  \eqref{E:Step 2C}} \label{S:2C}

Write
\begin{align}\label{E: split}
\dell_x^I\dell_y^J E_\leb C_\ep^*(x,y)
&=\sum_{\leb_j\leq \leb}\leb_j^{\Omega} \lr{{P_{I}\varphi_j(x)}}\cdot \lr{C_\ep P_{J}\varphi_j(y)}+\sum_{\leb_j\leq \leb}\leb_j^{|I|} \lr{P_{I}\varphi_j(x)} \cdot \lr{[\dell^J, C_\ep]\varphi_j(y)}.
\end{align}
As before, $[\dell^J, C_\ep]\in \Psi^{\abs{J}-2}.$ Hence, by the usual pointwise Weyl law, the second term in \eqref{E: split} and its convolution with $\rho$ are both $O(\leb^{n-2+\Omega}).$ Hence,
\begin{align*}\sup_{x,y\in \mathcal U_\ep}\abs{\dell_x^I\dell_y^J E_\leb C_\ep^*(x,y)-\rho*\dell_x^I\dell_y^J E_\leb C_\ep^*(x,y)}&=\sup_{x,y\in \mathcal U_\ep}\abs{V(x,y,\leb)-\rho*V(x,y,\leb)}\\
&\hspace{3cm} +O\lr{\leb^{n+\Omega-2}},
\end{align*}
where we have set
\[V(x,y,\leb):=\dell^I E_\leb (C_\ep \dell^J)^* (x,y)=\sum_{\leb_j\leq \leb}\leb_j^{\Omega}\lr{ P_{I}\varphi_j(x)}\cdot\lr{C_\ep P_J\varphi_j(y)}.\]
Define
\begin{align}
\alpha_{I,J}(x,y,\leb)&:=V(x,y,\leb)+\frac{1}{2}\sum_{\leb_j\leq \leb}\leb_j^{\Omega}\lr{\abs{P_{I}\varphi_{\leb_j}(x)}^2+\abs{C_\ep P_{J}\varphi_{\leb_j}(y)}^2} \label{E:alpha}\\
\beta_{I,J}(x,y,\leb)&:=  \rho*V(x,y,\leb)+\frac{1}{2}\sum_{\leb_j\leq \leb}\leb_j^{\Omega}\lr{\abs{P_{I}\varphi_{\leb_j}(x)}^2+\abs{C_\ep P_{J}\varphi_{\leb_j}(y)}^2}\label{E:beta}.
\end{align}
By construction, $\alpha_{I,J}(x,y,\cdot)$ is a monotone function of $\leb$ for $x,y$ fixed, and
$\alpha_{I,J}(x,y,\leb)-\beta_{I,J}(x,y,\leb)=V(x,y,\leb)-\rho*V(x,y,\leb).$
So we aim to show that 
\begin{equation}\label{E: alpha-beta}
 \sup_{x,y\in \mathcal U_\ep}\abs{\alpha_{I,J}(x,y,\leb)-\beta_{I,J}(x,y,\leb)}\leq c\,\ep \leb^{n-1+\Omega}+c_\ep \leb^{n-2+\Omega}.
\end{equation}
We control the difference in \eqref{E: alpha-beta} applying a Tauberian theorem for monotone functions \cite[Lemma 16]{CH}. To apply it we need to show the following:
\begin{itemize}
\item  There exists $c>0$ and $c_\ep>0$ making
\begin{equation} \int_{\leb-\ep}^{\leb+\ep} \abs{\partial_\mu\beta_{I,J}(x,y,\mu)}d\mu\leq c\ep \leb^{n-1+\Omega}+c_\ep \leb^{n-2+\Omega}. \label{E: cond a}
\end{equation}
\item  For all $N$ there exists $M_{\ep,N}$ so that for all $\leb>0$
\begin{equation}
\abs{\partial_\leb \lr{\alpha_{I,J}(x,y,\cdot) -\beta_{I,J}(x,y,\cdot)}*\phi_\ep(\mu)}\leq M_{\ep,N}\lr{1+\abs{\leb}}^{-N}. \label{E: cond b}
\end{equation}
\end{itemize}
 In equation \eqref{E: cond b}  we have set $\phi_\ep(\leb):=\tfrac{1}{\ep}\phi\lr{\tfrac{\leb}{\ep}}$ for some $\phi\in \mathcal S(\R)$ chosen  so that $\supp \hat{\phi}\subseteq (-1,1)$ and $\hat{\phi}(0)=1.$

 Relation \eqref{E: cond a} follows after applying Lemma \ref{L:PWL} to the piece of the integral corresponding to the second term in \eqref{E:beta} and from applying  Lemma \ref{L:Smoothed Estimate} together with Remark \ref{R:other Q} to $\rho*V=\rho*\dell^I E_\leb Q^*$, where $Q:=C_\ep \dell^J$ has vanishing subprincipal symbol.

To verify \eqref{E: cond b} note that 
$\supp(1-\widehat{\rho})\subseteq \{t: |t|\ge \inj(M,g)/2\}$ and 
$\supp(\widehat{\phi_\ep})\subseteq \{t: |t| \leq \frac{1}{\ep}\}$.
Observe  that 
$$\partial_\leb \Big(\alpha_{I,J}(x,y,\cdot)-\beta_{I,J}(x,y,\cdot)\Big)*\phi_\ep\,(\leb)=\mathcal F_{t\gives \leb}^{-1}\Big((1-\hat{\rho}(t))\,\hat{\phi}_\ep(t)\dell^I U(t)(\dell^J C_\ep)^*(x,y)\Big)(\leb).$$
By construction $U(t)C_\ep^*$ is a smoothing operator for $\tfrac{1}{2}\inj(M,g)<\abs{t}<\frac{1}{\ep}$. Thus, so is $\dell^I U(t)\lr{\dell^J C_\ep}^*$ which implies \eqref{E: cond b}.
This concludes the proof of relation  \eqref{E:Step 2C}. \qed

%---------------------------------------------------------------------------
% BIBLIOGRAPHY
%--------------------------------------------------------------------------------

\end{document}